\documentclass[letterpaper, 10 pt, conference]{ieeeconf}

\IEEEoverridecommandlockouts                              
\overrideIEEEmargins


\makeatletter
\let\NAT@parse\undefined
\makeatother

\usepackage{amsmath}
\usepackage{amssymb}
\usepackage{mathtools}
\usepackage{mathrsfs}
\usepackage{comment}

\newcommand{\R}{\mathbb{R}}		
\newcommand{\Rn}{\mathbb{R}^n}		
\newcommand{\M}{\mathcal{M}}		
\renewcommand{\P}{\mathcal{P}}		
\newcommand{\E}{\mathbb{E}}		
\newcommand{\N}{\mathcal{N}}		
\DeclareMathOperator*{\supp}{supp}	
\DeclareMathOperator*{\NE}{NE}		

\newcommand{\TV}{\textup{TV}}		

\DeclareMathOperator*{\tr}{tr}			

\newcommand{\inlinesubsection}[1]{%
    \vspace{0.5em}
    \noindent\textbf{#1.}
    \hspace{0.5em}
}


\usepackage{amsthm}
\theoremstyle{plain}
\newtheorem{theorem}{Theorem}
\newtheorem{proposition}{Proposition}

\theoremstyle{definition}
\newtheorem{definition}{Definition}
\newtheorem{example}{Example}

\theoremstyle{remark}
\newtheorem{remark}{Remark}

\makeatletter
\let\NAT@parse\undefined
\makeatother
\usepackage[numbers]{natbib}
\renewcommand*\citealt[2][]{\citet[#1]{#2}}

\usepackage{subcaption}			
\captionsetup{subrefformat=parens}	
\usepackage{booktabs,multirow}		
\usepackage{pgf}
\usepackage{graphicx,pgfplots,tikzscale}	
\usetikzlibrary{calc}			
\pgfplotsset{compat=newest}
\pgfplotsset{plot coordinates/math parser=false}

\usepackage{hyperref}		
\hypersetup{%
	colorlinks=true,%
	linkcolor=black,%
	urlcolor=black,%
	citecolor=black,%
	filecolor=black%
}				
\usepackage[all]{hypcap}	


\title{\LARGE \bf
	Approximately Gaussian Replicator Flows:\\
	Nonconvex Optimization as a Nash-Convergent Evolutionary Game
}

\author{%
	Brendon G.\ Anderson\authorrefmark{1} \and Samuel Pfrommer\authorrefmark{2} \and Somayeh Sojoudi\authorrefmark{2}%
	\thanks{\authorrefmark{1}Department of Mechanical Engineering, California Polytechnic State University. Email: {\tt \href{mailto:bga@calpoly.edu}{bga@calpoly.edu}}.}
	\thanks{\authorrefmark{2}Department of Electrical Engineering and Computer Sciences, University of California, Berkeley. Emails: {\tt \{\href{mailto:sam.pfrommer@berkeley.edu}{sam.pfrommer}, \href{mailto:sojoudi@berkeley.edu}{sojoudi}\}@berkeley.edu}.}%
	\thanks{This work was supported by grants from ONR, NSF, and ARO, and was conducted while Brendon G.\ Anderson was with the University of California, Berkeley.}%
}

\begin{document}

\maketitle
\thispagestyle{empty}
\pagestyle{empty}

\begin{abstract}
	This work leverages tools from evolutionary game theory to solve unconstrained nonconvex optimization problems. Specifically, we lift such a problem to an optimization over probability measures, whose minimizers exactly correspond to the Nash equilibria of a particular population game. To algorithmically solve for such Nash equilibria, we introduce \emph{approximately Gaussian replicator flows} (AGRFs) as a tractable alternative to simulating the corresponding infinite-dimensional replicator dynamics. Our proposed AGRF dynamics can be integrated using off-the-shelf ODE solvers when considering objectives with closed-form integrals against a Gaussian measure. We theoretically analyze AGRF dynamics by explicitly characterizing their trajectories and stability on quadratic objective functions, in addition to analyzing their descent properties. Our methods are supported by illustrative experiments on a range of canonical nonconvex optimization benchmark functions.
\end{abstract}

\section{Introduction}
\label{sec: intro}

We consider solving unconstrained optimization problems of the form
\begin{equation*}
	\inf_{x\in \Rn} f(x),
\end{equation*}
where $f$ is generally nonconvex. Such optimization problems arise naturally across a range of scientific and engineering disciplines, including computational chemistry \cite{chan2019bayesian}, computer vision \cite{ma2018accelerated}, and medicine \cite{gramfort2012mixed}. Notably, the classical empirical risk minimization approach in machine learning amounts to an unconstrained nonconvex optimization problem \cite{jain2017non}.

Evolutionary algorithms comprise one popular family of approaches for solving nonconvex problems. These algorithms stochastically search the parameter space by evolving a population of candidate solutions according to biologically-inspired variation and selection operations \cite{sloss20202019}. Unlike gradient-based methods like stochastic gradient descent, practical evolutionary algorithms are generally intractable for theoretical analysis due to the handcrafted nature of their update rules. Existing analyses are thus typically limited to a very simple $(1+1)$-evolution strategy \cite{glasmachers2020global, rudolph2013convergence}, or a discrete optimization setting \cite{he2015average}. Conversely, negative results show that evolutionary algorithms cannot converge exponentially fast for certain classes of objective functions \cite{tarlowski2023asymptotic}.

Instead of relying on handcrafted evolution heuristics, we propose an evolutionarily-inspired optimization method that naturally arises from a principled recasting of the optimization problem as a population game. In doing so, we achieve the following contributions:
\begin{enumerate}
	\item We develop a framework for reformulating nonconvex optimization as a population game, which we show can be solved to global optimality by simulating the replicator dynamics for continuous objectives with unique global minima.
	\item We propose \emph{approximately Gaussian replicator flows} (AGRFs) as a tractable alternative to simulating the replicator dynamics, and theoretically analyze the resulting ordinary differential equation's descent properties, in addition to determining its explicit solutions for quadratic objective functions.
	\item We provide illustrative experiments using canonical nonconvex optimization test functions, demonstrating that our optimization method can escape local minima, adapt to the optimization landscape, and recover from poorly conditioned initializations.
\end{enumerate}

\subsection{Related Works}
Our work, drawing on the frameworks and tools of population games and evolutionary dynamics \citep{sandholm2010population}, is intimately related to evolutionary algorithms for optimization. Some of the most classical of such methods are evolution strategies (ESs), wherein a population of candidate solutions iteratively evolves according to stochastic mutation and selection rules \citep{rechenberg1973evolutionsstrategie}. Covariance matrix adaptation evolution strategy (CMA-ES) extends ESs by allowing the covariance matrix of the ES sampling distribution to adapt to the geometry of the objective function \citep{hansen2001completely}. CMA-ES is often considered the state-of-the-art in evolutionary computation \citep{glasmachers2020global}, yet it has also been shown to be subsumed by natural evolution strategies (NESs) \citep{akimoto2012theoretical}.

Unlike the heuristic approach to ES-based update rules, NES provides a principled optimization method by using natural gradient descent \citep{amari1998natural} to minimize the expected objective loss over a statistical manifold of parameterized probability distributions \citep{wierstra2014natural}. Closely related are estimation of distribution algorithms (EDAs) \citep{larranaga2001estimation}, which explicitly build, sample from, and update probability distributions to represent promising candidate solutions to the optimization, albeit these distributions are typically constructed via maximum likelihood estimation. Generalizing ES, CMA-ES, and NES, the information geometric optimization (IGO) framework poses optimization as a continuous time natural gradient flow on a statistical manifold \citep{ollivier2017information}. IGO algorithms are constructed by discretizing both time (via Euler's method) and spatial integration (via sampling) to approximate the flow.

All of the aforementioned evolutionary algorithms view the objective $f$ as a black box, and as such, rely on evaluating $f$ on a finite number of random samples. In practice, the challenges posed by the objective function $f$ are often structural and not due to a lack of information, e.g., training machine learning models often relies on nonconvex and nonsmooth objectives \citep{jain2017non,fattahi2020exact,anderson2019global}, whose functional forms are known. Our framework is capable of leveraging closed-form integrals of nonconvex yet white-box objectives $f$ against a Gaussian distribution, in order to pose evolutionary optimization as a \emph{deterministic} differential equation. That is, unlike the above stochastic search methods, which often rely on heuristically designed algorithmic rules and parameters, our evolutionarily-inspired optimization framework can be methodically implemented via off-the-shelf ordinary differential equation (ODE) solvers without random sampling.

Two of the works closest related to ours are \citet{jacimovic2022natural,jacimovic2023fundamental}. The authors draw links between the replicator dynamics and minimization of a bilinear-quadratic objective function by showing that replicator trajectories are natural gradient flows for such objectives. This characterization relies on inverting the Fisher information matrix to evaluate the flow's vector field, whereas our approximately Gaussian replicator flows admit closed-form solutions for general quadratic functions. Our framework is particularly distinct due to the fact that our proposed AGRFs allow for using off-the-shelf ODE solvers whenever the objective is closed-form integrable against a Gaussian measure, e.g., general polynomials and sinusoids.

\subsection{Notations}
\label{sec: notations}

We denote the identity matrix by $I$, the dimension of which will always be clear from context. If $X$ is a topological space, the set of finite signed Borel measures on $X$ is denoted by $\M(X)$, and the set of Borel probability measures is denoted by $\P(X)$. The support of a measure $\mu \in \M(X)$ is denoted by $\supp(\mu)$. For $\mu\in\M(X)$ and a measurable real-valued function $f$ on $X$, we define $\left<f,\mu\right> \coloneqq \int_X f d\mu$, which we allow to be infinite.
The derivative of a measure-valued mapping $\mu\colon [0,\infty)\to\M(X)$ at $t\in[0,\infty)$, if it exists, is the measure $\dot{\mu}(t)\in\M(X)$ defined by 
\[
	\lim_{\epsilon \to 0} \left\|\tfrac{\mu(t+\epsilon) - \mu(t)}{\epsilon} - \dot{\mu}(t)\right\|_\TV = 0,
\]
where $\|\cdot\|_\TV$ is the total variation norm on $\M(X)$. The Dirac distribution at $x\in X$ is denoted by $\delta_x$, and the Gaussian distribution on $\Rn$ with mean $m\in\Rn$ and covariance $C\in\R^{n\times n}$ is denoted by $\N(m,C)$.

\section{Optimization as an Infinite-Strategy Game}
\label{sec: method}

We seek to solve the unconstrained optimization problem
\begin{equation}
	p^\star \coloneqq \inf_{x\in X} f(x),
	\label{eq: pstar}
\end{equation}
over the Euclidean space $X = \Rn$ with a possibly nonsmooth and nonconvex objective function $f\colon X \to \R$. We assume that $f$ is Borel measurable and attains a minimum.

The problem $p^\star$ is equivalent to an infinite-dimensional convex optimization problem over probability measures \citep{anderson2023tight}:
\begin{equation}
	p^\star = p' \coloneqq \inf_{\mu\in\P(X)} \left<f,\mu\right>.
	\label{eq: pprime}
\end{equation}
It is easy to see that $p'$ also attains a minimizer, and that, if $\mu^\star$ solves $p'$, then $x^\star$ solves $p^\star$ for all $x^\star \in \supp(\mu^\star)$. Thus, our attention may be refocused on solving $p'$. We now show that solving $p'$ amounts to finding a Nash equilibrium of a certain population game.

\subsection{Turning Optimizers into Nash Equilibria}

We briefly recall the key notions used in the study of population games and evolutionary dynamics---see \citet{sandholm2010population} for a more thorough treatment.

Consider a large population of strategic individuals playing a game. The players choose strategies from a \emph{strategy set} $S$, which is assumed to be a topological space. A distribution of strategy choices employed across the population is encoded by a probability measure $\mu \in \P(S)$, termed a \emph{population state}. Every population state has an associated \emph{mean payoff function} $F_\mu \colon S \to \R$, assumed to be continuous, with $F_\mu(s)$ giving the payoff to strategy $s$ when the population state is $\mu$. The mapping $F\colon \mu \mapsto F_\mu$ is called the \emph{population game}. A \emph{Nash equilibrium} of the game $F$ is a population state $\mu$ such that $\left<F_\mu,\nu\right> \le \left<F_\mu,\mu\right>$ for all $\nu\in\P(S)$, and, if the inequality holds strictly for all $\nu \ne \mu$, then $\mu$ is called a \emph{strict Nash equilibrium} of $F$. The set of all Nash equilibria of $F$ is denoted by $\NE(F)$.

Throughout the remainder of the paper, we consider a population game where the strategy set is $X$, and the mean payoff functions are given by \emph{excess payoffs} under the objective function $f$:
\begin{equation*}
	F_\mu(x) = \left<f,\mu\right> - f(x).
\end{equation*}
Such mean payoff functions can be written as the average of the \emph{relative payoffs} $g(x,x') \coloneqq f(x')-f(x)$ with respect to the population state $\mu$:
\begin{equation*}
	F_\mu(x) = \int_X g(x,x') d\mu(x').
\end{equation*}
Note that this is an infinite-strategy population game since $X=\Rn$ is continuous. A key feature of this game is that it puts optimizers of $p'$ in one-to-one correspondence with Nash equilibria:
\begin{proposition}
	\label{prop: nash_are_global_optima}
	A distribution $\mu\in\P(X)$ is a Nash equilibrium of $F$ if and only if $\mu$ is a global minimizer of $p'$. Furthermore, $\mu$ is a strict Nash equilibrium of $F$ if and only if $\mu$ is the unique global minimizer of $p'$.
\end{proposition}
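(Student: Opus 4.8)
The strategy is to exploit the excess-payoff structure $F_\mu(x) = \langle f,\mu\rangle - f(x)$ to collapse the Nash condition into a plain comparison of objective pairings, after which both equivalences become rearrangements. The single computation I need is, for an arbitrary $\nu\in\P(X)$, that integrating the constant $\langle f,\mu\rangle$ against the probability measure $\nu$ returns $\langle f,\mu\rangle$ itself, so that
\begin{equation*}
	\langle F_\mu,\nu\rangle = \int_X \big(\langle f,\mu\rangle - f(x)\big)\, d\nu(x) = \langle f,\mu\rangle - \langle f,\nu\rangle.
\end{equation*}
Specializing to $\nu=\mu$ gives $\langle F_\mu,\mu\rangle = 0$, which fixes the right-hand side of the Nash inequality.

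With these two identities, the first equivalence follows in both directions at once. For the \emph{only if} direction, if $\mu$ is a Nash equilibrium then $\langle F_\mu,\nu\rangle \le \langle F_\mu,\mu\rangle = 0$ rearranges to $\langle f,\mu\rangle \le \langle f,\nu\rangle$ for every $\nu\in\P(X)$; taking the infimum over $\nu$ and combining with the trivial bound $\langle f,\mu\rangle \ge p'$ forces $\langle f,\mu\rangle = p'$, i.e.\ $\mu$ minimizes $p'$. For the \emph{if} direction I would use the cleaner pointwise observation that at a minimizer one has $F_\mu(x) = p^\star - f(x) \le 0$ for all $x$ (since $f \ge p^\star$ everywhere), so that $\langle F_\mu,\nu\rangle \le 0 = \langle F_\mu,\mu\rangle$ holds automatically for every $\nu$. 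The strict statement is then the same argument with $\le$ replaced by $<$ for all $\nu\ne\mu$: strict Nash becomes $\langle f,\mu\rangle < \langle f,\nu\rangle$ for every $\nu\ne\mu$, which is exactly uniqueness of the minimizer, and I would note the elementary point that a strict minimizer is in particular a minimizer, so no separate verification of the non-strict inequality is required.

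The one subtlety I expect to be the main obstacle is well-definedness of the pairings, since the paper permits $\langle f,\mu\rangle$ to equal $+\infty$ and the subtraction above could a priori produce an indeterminate $\infty - \infty$. I would dispense with this as follows. Because the minimum of $f$ is attained, $p^\star = p'$ is a finite real number and $f \ge p^\star$ everywhere, so every pairing $\langle f,\nu\rangle$ lies in $[p^\star,+\infty]$. Any minimizer therefore satisfies $\langle f,\mu\rangle = p^\star < \infty$, and the pointwise argument above avoids any subtraction in the \emph{if} direction; in the converse direction, the game is only defined when $F_\mu$ is real-valued, which already forces $\langle f,\mu\rangle < \infty$, so the inequality $\langle f,\mu\rangle \le \langle f,\nu\rangle$ remains meaningful even when the right-hand side is infinite and no genuine cancellation of infinities occurs. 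Once finiteness of the relevant pairings is recorded, the remaining steps are the routine rearrangements described above.
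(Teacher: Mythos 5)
Your proof is correct and follows essentially the same route as the paper: both reduce the Nash condition to the identity $\left<F_\mu,\nu\right> = \left<f,\mu\right> - \left<f,\nu\right>$ together with $\left<F_\mu,\mu\right> = 0$, after which each equivalence (and its strict version) is a rearrangement. Your added remarks on the possible non-finiteness of $\left<f,\nu\right>$ and the pointwise bound $F_\mu(x)\le 0$ in the converse direction are harmless refinements of the same argument rather than a different approach.
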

\begin{proof}
	Let $\mu\in\NE(F)$. Then, by definition of Nash equilibria, it holds for all $\nu\in\P(X)$ that
	\begin{align*}
		&\int_X \int_X (f(x') - f(x)) d\mu(x') d\nu(x) \\
		& \quad \le \int_X \int_X (f(x') - f(x)) d\mu(x') d\mu(x),
	\end{align*}
	and hence
	\begin{equation*}
		\left<f,\mu\right> - \left<f,\nu\right> \le \left<f,\mu\right> - \left<f,\mu\right> = 0,
	\end{equation*}
	proving that $\left<f,\mu\right> \le \left<f,\nu\right>$. Thus, $\mu$ solves $p'$.

	On the other hand, suppose that $\mu\in\P(X)$ solves $p'$, so that $\left<f,\mu\right> \le \left<f,\nu\right>$ for all $\nu\in\P(X)$. Then reversing the above line of analysis yields that $\left<F(\mu),\nu\right> \le \left<F(\mu),\mu\right>$ for all $\nu\in\P(X)$, implying that $\mu\in\NE(F)$. The second assertion follows by the same reasoning using the appropriate strict inequalities.
\end{proof}

Another characteristic of this game is that it is \emph{monotone}, meaning that $\left<F_\mu-F_\nu,\mu-\nu\right> \le 0$ for all $\mu,\nu\in\P(X)$:
\begin{proposition}
	\label{prop: monotone}
	The population game $F$ is monotone.
\end{proposition}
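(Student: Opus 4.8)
The plan is to exploit the special additive structure of the excess-payoff functions, which makes the difference $F_\mu - F_\nu$ collapse to a spatial constant. The first step is to write out the pointwise difference and observe a cancellation: for every $x \in X$,
\[
	F_\mu(x) - F_\nu(x) = \bigl(\langle f,\mu\rangle - f(x)\bigr) - \bigl(\langle f,\nu\rangle - f(x)\bigr) = \langle f,\mu\rangle - \langle f,\nu\rangle.
\]
The $-f(x)$ terms cancel because they depend only on the test point and not on the population state, so $F_\mu - F_\nu$ is the constant function with value $c \coloneqq \langle f,\mu\rangle - \langle f,\nu\rangle$, independent of $x$.

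The second step is to pair this constant against the signed measure $\mu - \nu$. Using linearity of the pairing in its measure argument together with $F_\mu - F_\nu \equiv c$,
\[
	\langle F_\mu - F_\nu, \mu - \nu\rangle = c\,(\mu-\nu)(X) = c\bigl(\mu(X) - \nu(X)\bigr).
\]
Since $\mu$ and $\nu$ are probability measures, $\mu(X) = \nu(X) = 1$, whence $(\mu-\nu)(X) = 0$ and the entire expression vanishes. This gives $\langle F_\mu - F_\nu, \mu - \nu\rangle = 0 \le 0$, which is exactly the monotonicity inequality. In fact the inequality always holds with equality, so the game is simultaneously monotone and ``anti-monotone''; no monotone-operator machinery or analytic estimates are required.

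The only point needing care — and the closest thing to an obstacle — is the well-definedness of the scalar $c$. The cancellation is purely algebraic, but to subtract the two averages I need both $\langle f,\mu\rangle$ and $\langle f,\nu\rangle$ to be finite, which is ensured by the standing assumption that each mean payoff function $F_\mu$ is a finite, continuous real-valued function on $X$ (forcing $\langle f,\mu\rangle$ to be finite). Restricting to population states for which these integrals are finite, the computation above goes through verbatim.
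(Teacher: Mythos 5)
Your proof is correct and rests on the same underlying fact as the paper's: the paper writes $\left<F_\mu-F_\nu,\mu-\nu\right>$ as a double integral of $f(x')-f(x)$ against $(\mu-\nu)\otimes(\mu-\nu)$ and notes it vanishes because $(\mu-\nu)(X)=0$, which is exactly the cancellation you package as ``$F_\mu-F_\nu$ is a constant paired against a signed measure of total mass zero.'' Your version is a slightly more explicit writeup of the same one-line computation, and your remark on finiteness of $\left<f,\mu\right>$ is already covered by the standing assumption that each $F_\mu$ is a real-valued (continuous) payoff function.
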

\begin{proof}
	Let $\mu,\nu \in \P(X)$. Then
	\begin{align*}
		&\left<F_\mu-F_\nu,\mu-\nu\right> \\
		& \quad = \int_X \int_X (f(x')-f(x)) d(\mu-\nu)(x') d(\mu-\nu)(x) \\
		& \quad = 0.
	\end{align*}
\end{proof}
Monotonicity of population games has been found to be a useful condition for ensuring the convergence of evolutionary dynamics to Nash equilibria---see, e.g., \cite{hofbauer2009stable,fox2013population,park2019population,arcak2021dissipativity,anderson2023evolutionary}.

\subsection{Replicator Dynamics as an Optimizing Process}
The \emph{replicator dynamics} are one of the most popular evolutionary dynamics models (EDMs). EDMs are processes in which players continuously revise their strategies according to a certain revision protocol \citep{sandholm2010population}. Specifically, the replicator dynamics model the situation in which players choose to imitate the strategy of a random opponent who currently has higher payoff, with probability proportional to the current difference in their payoffs. Mathematically, the replicator dynamics read
\begin{equation}
	\dot{\mu}(t)(B) = \int_B \left(E(\delta_x,\mu(t)) - E(\mu(t),\mu(t))\right)d(\mu(t))(x)
	\label{eq: replicator}
\end{equation}
for all Borel sets $B$ and times $t\in[0,\infty)$, where $E \colon \P(X) \times \P(X) \to \R$, given by
\begin{equation*}
	E(\nu,\mu) = \int_X F_\mu d\nu = \left<f,\mu\right> - \left<f,\nu\right>,
\end{equation*}
encodes the \emph{average mean payoff} to a population state $\nu\in\P(X)$ relative to a population state $\mu\in\P(X)$ according to the payoffs of our game $F$. Our key insight is the following: if the replicator dynamics converge to a Nash equilibrium of our game, then simulating them will lead us to a global minimizer of $p'$ according to \autoref{prop: nash_are_global_optima}. Indeed, the following result shows that the monotonicity of our game ensures that this convergence happens in the case that $f$ is continuous and has a unique global minimizer, even when $f$ is nonconvex and nonsmooth.
\begin{theorem}
	\label{prop: replicator_converges}
	Let $\mu_0\in\P(X)$, and let $\mu\colon[0,\infty) \to \P(X)$ be a solution to the replicator dynamics with initial condition $\mu(0) = \mu_0$. If $f$ is continuous and has a unique global minimizer $x^\star \in X$, then it holds that $\mu(t)$ converges weakly to $\mu^\star \coloneqq \delta_{x^\star}$ as $t\to\infty$.
\end{theorem}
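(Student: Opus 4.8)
The plan is to bypass the abstract evolutionary-dynamics machinery entirely and exploit the fact that our payoff has the very special ``excess'' form $F_\mu(x)=\langle f,\mu\rangle-f(x)$, which renders the replicator dynamics integrable in closed form. Concretely, I would first show that any solution must be the time-indexed Gibbs (Boltzmann) measure
\[
  \mu(t)=\frac{e^{-tf}\,\mu_0}{\langle e^{-tf},\mu_0\rangle},
  \qquad\text{i.e.}\qquad
  \mu(t)(B)=\frac{\int_B e^{-tf(x)}\,d\mu_0(x)}{\int_X e^{-tf(x)}\,d\mu_0(x)},
\]
which is well defined since $f\ge p^\star$ forces $0<\langle e^{-tf},\mu_0\rangle\le e^{-tp^\star}<\infty$. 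Weak convergence $\mu(t)\rightharpoonup\delta_{x^\star}$ then reduces to the classical statement that a Gibbs measure concentrates on the minimizer of its energy as the inverse temperature $t\to\infty$, i.e.\ to a Laplace-type concentration argument.

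To justify the closed form, I would write $\rho_t\coloneqq d\mu(t)/d\mu_0$ and substitute $\mu(t)=\rho_t\mu_0$ into the replicator equation \eqref{eq: replicator}; using $E(\delta_x,\mu(t))-E(\mu(t),\mu(t))=\langle f,\mu(t)\rangle-f(x)$, the dynamics decouple into the pointwise \emph{linear} ODE $\dot\rho_t(x)=(\langle f,\mu(t)\rangle-f(x))\rho_t(x)$ for $\mu_0$-a.e.\ $x$, with $\rho_0\equiv 1$. Solving this and absorbing the $x$-independent factor $\exp(\int_0^t\langle f,\mu(s)\rangle\,ds)$ into the normalization (forced because each $\mu(t)$ is a probability measure) yields the displayed formula, and differentiating the formula back confirms that it indeed solves the dynamics.

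For the concentration step it suffices to show $\mu(t)(X\setminus U)\to 0$ for every open neighborhood $U$ of $x^\star$, which is equivalent to weak convergence to $\delta_{x^\star}$. Factoring out $e^{tp^\star}$ and setting $h\coloneqq f-p^\star\ge 0$ (so $h>0$ off $x^\star$ by uniqueness of the minimizer), I would lower-bound the denominator by the mass near the minimizer, $\langle e^{-th},\mu_0\rangle\ge e^{-t\sup_V h}\mu_0(V)$ for a small ball $V\subseteq U$ about $x^\star$, and upper-bound the numerator by $e^{-t\inf_{X\setminus U}h}$. The resulting ratio is then $\le e^{-t(\inf_{X\setminus U}h-\sup_V h)}/\mu_0(V)$, which tends to $0$ provided $\mu_0(V)>0$ and $\inf_{X\setminus U}h>\sup_V h$. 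Continuity makes $\sup_V h$ arbitrarily small as $V$ shrinks, so the argument closes once the two standing positivity requirements are secured.

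The hard part is exactly these two requirements, and I would flag that neither follows from continuity plus a unique minimizer alone. First, $\mu_0(V)>0$ for every ball about $x^\star$ demands $x^\star\in\supp(\mu_0)$; otherwise the Gibbs measure never charges the minimizer. Second, and more seriously, $\inf_{X\setminus U}h>0$ is the assertion that the sublevel sets $\{f\le p^\star+\epsilon\}$ are eventually contained in $U$ — i.e.\ no mass-carrying region far from $x^\star$ has near-optimal value. This holds under \emph{coercivity} of $f$ (equivalently, compact sublevel sets), since the nested compacts $\{f\le p^\star+\epsilon\}$ then decrease to $\{x^\star\}$ and sit inside any given $U$ for small $\epsilon$; without it, mass can escape to infinity along a sequence $x_j$ with $f(x_j)\downarrow p^\star$ and $\mu(t)$ need not converge to $\delta_{x^\star}$. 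An equivalent Lyapunov route — monitoring $V(t)=\langle f,\mu(t)\rangle$, which the closed form shows obeys $\dot V(t)=-(\langle f^2,\mu(t)\rangle-\langle f,\mu(t)\rangle^2)\le 0$ and decreases to $p^\star$, then identifying every weak limit point via the Portmanteau theorem as supported on $\{f=p^\star\}=\{x^\star\}$ — meets precisely the same obstacle, now disguised as establishing tightness of the family $\{\mu(t)\}$.
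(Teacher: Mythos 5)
Your route is genuinely different from the paper's. The paper never touches the dynamics directly: it observes that $\delta_{x^\star}$ is the unique, hence strict, Nash equilibrium (\autoref{prop: nash_are_global_optima}), combines this with monotonicity (\autoref{prop: monotone}) to conclude that $\delta_{x^\star}$ is strongly uninvadable with infinite invasion barrier, and then invokes Theorem~3 of Oechssler and Riedel (2001) as a black box. Your closed-form solution is correct and, for this particular game, is the more transparent argument: since $E(\mu,\mu)=0$ identically, the per-strategy growth rate in \eqref{eq: replicator} is exactly $\left<f,\mu(t)\right>-f(x)$, the density ODE is linear, and $\mu(t)\propto e^{-tf}\mu_0$ follows; the Laplace-type concentration step is then standard, as is the Lyapunov identity $\dot V=-\mathrm{Var}_{\mu(t)}(f)$ you mention as an alternative. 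What the citation buys the paper is brevity; what your computation buys is an explicit accounting of which hypotheses are actually used --- and that is where the substance of the comparison lies.

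The two obstructions you flag are genuine, and they show that the theorem \emph{as stated} is false rather than that your proof is deficient. For the support condition: if $\mu_0=\delta_{x_0}$ with $x_0\ne x^\star$, then $\mu(t)\equiv\delta_{x_0}$ solves \eqref{eq: replicator} (the integrand vanishes $\mu_0$-a.e.), so convergence to $\delta_{x^\star}$ cannot hold for arbitrary $\mu_0\in\P(X)$; a hypothesis such as $x^\star\in\supp(\mu_0)$ is unavoidable. For the tightness condition: on the noncompact space $X=\Rn$ one can build a continuous $f$ with a unique minimizer but with $\inf_{X\setminus U}(f-p^\star)=0$ along a sequence escaping to infinity, and a $\mu_0$ whose tail mass then dominates the Gibbs normalization, so that $\mu(t)$ leaks to infinity instead of concentrating at $x^\star$; compact sublevel sets (or tightness of $\{\mu(t)\}_{t\ge 0}$) must be assumed or established. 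The paper's proof does not evade either issue --- the cited theorem of Oechssler and Riedel is formulated for bounded payoff functions and carries its own restrictions on admissible initial states, neither of which is verified or propagated into the statement here (the paper's running examples are unbounded polynomials on $\Rn$), and the Dirac rest point above shows those restrictions cannot be vacuous. Your argument becomes a complete, self-contained proof once the hypotheses ``$x^\star\in\supp(\mu_0)$'' and ``$f$ has compact sublevel sets'' are added to the theorem, which is exactly the repair the statement needs.
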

\begin{proof}
	Suppose that $f$ has a unique global minimizer $x^\star \in X$. Clearly, $\mu^\star = \delta_{x^\star}$ is the unique global minimizer of $p'$. Thus, $\mu^\star$ is a strict Nash equilibrium of $F$ by \autoref{prop: nash_are_global_optima} and hence $\left<F_{\mu^\star},\nu\right> < \left<F_{\mu^\star},\mu^\star\right>$ for all $\nu \in \P(X) \setminus \{\mu^\star\}$. By \autoref{prop: monotone}, it holds for all such $\nu$ that
	\begin{align*}
		& \left<F_\nu,\nu\right> - \left<F_\nu,\mu^\star\right> \\
		&\quad = \left<F_{\mu^\star},\nu\right> - \left<F_{\mu^\star},\mu^\star\right> + \left<F_\nu - F_{\mu^\star},\nu-\mu^\star\right> \\
		&\quad < 0.
	\end{align*}
	Thus, $\mu^\star$ is strongly uninvadable with uniform invasion barrier $\epsilon = \infty$, per \citet[Definition~5]{oechssler2001evolutionary}. The result then follows from \citet[Theorem~3]{oechssler2001evolutionary}.
\end{proof}

\subsection{Simulating the Replicator Dynamics} \label{sec: simulating}

\autoref{prop: replicator_converges} shows that we can find a globally optimal solution to \eqref{eq: pstar} by simulating the replicator dynamics \eqref{eq: replicator} with our constructed population game $F$. However, this constitutes solving an infinite-dimensional differential equation in general. It was shown in \citet{cressman2006stability} that, if $\mu\colon [0,\infty) \to \P(X)$ is a solution to the replicator dynamics, then its mean and covariance
\begin{align*}
	m(t) &\coloneqq \int_X x d(\mu(t))(x), \\
	C(t) &\coloneqq \int_X x x^\top d(\mu(t))(x) - m(t) m(t)^\top,
\end{align*}
evolve according to the finite-dimensional ordinary differential equations
\begin{align*}
	\dot{m}_i(t) &= -\left. \frac{\partial}{\partial \lambda_i} E(\mu_\lambda (t), \mu(t)) \right|_{\lambda = 0}, \\
	\dot{C}_{ij}(t)  &= \left. \frac{\partial^2}{\partial \lambda_i \partial \lambda_j} E(\mu_\lambda(t), \mu(t)) \right|_{\lambda = 0},
\end{align*}
for all $i,j\in\{1,\dots,n\}$, where $\mu_\lambda(t) \in \P(X)$ is the measure defined by
\begin{equation*}
	\mu_\lambda(t)(B) = \frac{\int_B \exp(-\lambda^\top x)d(\mu(t))(x)}{\int_X \exp(-\lambda^\top x) d(\mu(t))(x)}.
\end{equation*}
It is easy to show that these ODEs can be rewritten as
\begin{equation}
	\begin{aligned}
		\dot{m}_i(t) ={}&m_i(t) \E_{x\sim \mu(t)}[f(x)] - \E_{x\sim \mu(t)}[x_i f(x)], \\
		\dot{C}_{ij}(t) ={}&(C_{ij}(t) - m_i(t) m_j(t))\E_{x\sim \mu(t)}[f(x)] \\
		& \quad - \E_{x\sim \mu(t)} [x_i x_j f(x)] \\
				&\quad + m_i(t) \E_{x\sim \mu(t)}[x_j f(x)] \\
				&\quad + m_j(t) \E_{x\sim \mu(t)}[x_i f(x)].
	\end{aligned}
	\label{eq: odes}
\end{equation}
\citet{cressman2006stability} showed that, in the case that the relative payoff function $g$ takes a bilinear-quadratic form, the set of Gaussian distributions is invariant under \eqref{eq: replicator}, and therefore these two finite-dimensional ODEs fully characterize the replicator dynamics. In such cases, it holds that $C(t) \to 0$ as $t\to\infty$, and therefore the trajectory $t\mapsto \mu(t)$ converges towards $t\mapsto \delta_{m(t)}$ (in the weak topology). Thus, simulating the above finite-dimensional ODEs to generate a limiting Dirac measure $\delta_{m^\star}$ naturally gives us a minimizer $m^\star$ for this bilinear-quadratic case, through an application of \autoref{prop: replicator_converges}.

In the general case, solving the ODEs \eqref{eq: odes} is intractable, as computing the expectations cannot be carried out for an arbitrary distribution $\mu(t)$ and objective function $f$. To make the differential equations solvable (using off-the-shelf ODE solvers), we propose to approximate $\mu(t)$ by $\N(m(t),C(t))$ at all times $t$. This leads to our primary definition:

\begin{definition}
	A solution $(m,C) \colon [0,\infty) \to \Rn \times \R^{n\times n}$ to the system of ordinary differential equations
	\begin{equation}
		\begin{aligned}
			\dot{m}_i(t) ={}&m_i(t) \E_{x\sim \N(m(t),C(t))}[f(x)] \\
			& \quad - \E_{x\sim \N(m(t),C(t))}[x_i f(x)], \\
			\dot{C}_{ij}(t) ={}&(C_{ij}(t) - m_i(t) m_j(t))\E_{x\sim \N(m(t),C(t))}[f(x)] \\
			& \quad - \E_{x\sim \N(m(t),C(t))} [x_i x_j f(x)] \\
					&\quad + m_i(t) \E_{x\sim \N(m(t),C(t))}[x_j f(x)] \\
					&\quad + m_j(t) \E_{x\sim \N(m(t),C(t))}[x_i f(x)], \\
		\end{aligned}
		\label{eq: agrf}
	\end{equation}
	is called an \emph{approximately Gaussian replicator flow (AGRF)} under $f$.
\end{definition}

The right-hand sides of \eqref{eq: agrf} can be determined explicitly for any objective functions $f$ for which we can compute the appropriate Gaussian expectations in closed form. This class of objective functions, which we refer to as \emph{Gaussian-integrable}, includes a wide variety of functions encountered in practice, as we will now discuss further.

\subsection{Closed-Form AGRF Dynamics}
\label{eq: closed-form_agrf_dynamics}

Explicitly writing out the AGRF dynamics \eqref{eq: agrf} requires computing the expectations of $f(x)$, $x_i f(x)$, and $x_i x_j f(x)$ under Gaussian-distributed vectors $x$. For general objectives $f$, these quantities can be approximated using Monte Carlo schemes. However, evaluating the desired estimates to sufficient resolution with such an approach is computationally intensive. We thus restrict our focus to Gaussian-integrable objective functions throughout the remainder of the paper, with polynomials and sinusoids serving as running examples.

\subsubsection{Polynomials}
For polynomials, the Gaussian expectations can be symbolically derived in closed-form from the following well-known identity \citep{barvinok2007integration}:
\begin{equation*}
    \E_{x \sim \N(0, I)} [x^{\alpha}] = 
    \begin{cases}
        \pi^{-\frac{n}{2}} \prod_{i=1}^n 2^{\frac{\alpha_i}{2}} \Gamma\left(\frac{\alpha_i + 1}{2} \right) & \text{$\alpha_i$ all even}, \\
        0 & \text{otherwise,}
    \end{cases}
\end{equation*}
where $x^{\alpha} = x_1^{\alpha_1} \cdots x_n^{\alpha_n}$ is a monomial of order $\alpha = (\alpha_1,\dots,\alpha_n)$ and $\Gamma$ is Euler's gamma function. This result is generalized to arbitrary mean and covariance by symbolically applying an affine transformation to $x$, and substituting for a particular mean $m$ and covariance $C$. The extension from monomials to polynomials follows trivially from linearity of integration. Notably, since $x_i f(x)$ and $x_i x_j f(x)$ are all polynomials if $f(x)$ is a polynomial, we can integrate all these expressions using the same approach.

\subsubsection{Sinusoids}
For sinusoids (including cosine), we can derive closed-form expectations for complex exponentials $x\mapsto e^{i a^\top x}$ by computing the Fourier transform of multivariate Gaussian density functions. Straightforward calculus results in the following expressions, where, here, we use $j,k$ as indices to reserve the symbol $i$ for the imaginary unit:
\begin{align*}
	\E_{x \sim \N(m, C)} \left[e^{i a^\top x}\right] &= e^{i a^\top m - \frac{1}{2}a^\top C a} \eqqcolon \overline{m}, \\
	\E_{x \sim \N(m, C)} \left[x_j e^{i a^\top x}\right] &= \left(m_j + i C_j^\top a\right) \overline{m}, \\
	\E_{x \sim \N(m, C)} \left[x_j x_k e^{i a^\top x}\right] &= \Big(C_{jk} + m_j m_k - (C_j^\top a)(C_k^\top a) \\
	&\qquad + i(m_j C_k^\top a + m_k C_j^\top a) \Big) \overline{m},
\end{align*}
where $C_j$ is the $j$th row of $C$. The appropriate expectations for sinusoids are then derived using Euler's formula.

\begin{remark}
Our proposed approach for deterministically approximating the replicator dynamics using AGRFs intimately relies on the closed-form computation of the ODE derivatives, which requires Gaussian-integrable objective functions. We highlight two potential extensions to more complex objectives which are not as well-behaved. The first consists of estimating expectations using a Monte Carlo scheme and leveraging ODE integration methods which are better suited for noisy derivatives. The second avenue involves using existing approximation techniques to locally represent the objective in a Gaussian-integrable form, e.g., Taylor expansions for polynomials, or Fourier series for sinusoids. Given such a local approximation, the ODE derivatives are again evaluable in closed-form. Investigating these extensions is an exciting area of future work.
\end{remark}

\section{Theoretical Analysis of AGRFs}
\label{sec: theory}

We now theoretically analyze our proposed approximately Gaussian replicator flows. Our analysis emphasizes AGRFs for quadratic objective functions, which serves to provide insight into the AGRF behavior in the vacinity of local minima of more general twice continuously differentiable functions.
We begin by explicitly characterizing the governing equations of the AGRF for a quadratic objective function.

\subsection{Explicit Characterization of Quadratic Flows}

The following theorem is similar to \citet[Theorem~1]{cressman2006stability}, albeit their result is restricted to evolutionary games with bilinear-quadratic payoffs of the form $g(x,x') = -x^\top x + x^\top Q x'$, and thus cannot directly be used for our purposes where $g(x,x') = f(x')-f(x)$.

\begin{theorem}
	\label{prop: quadratic_flow}
	Consider $f\colon \Rn \to \R$ given by $f(x) = x^\top A x + b^\top x + c$ with $A\in\R^{n\times n}$ symmetric. The AGRF under $f$ is given by
	\begin{equation}
		\begin{aligned}
			\dot{m}(t) &= -2 C(t) A m(t) - C(t) b\\
			\dot{C}(t) &= -2 C(t) A C(t).
		\end{aligned}
		\label{eq: quadratic_flow}
	\end{equation}
\end{theorem}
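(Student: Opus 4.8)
The plan is to substitute the quadratic objective $f(x) = x^\top A x + b^\top x + c$ directly into the AGRF system \eqref{eq: agrf} and evaluate each Gaussian expectation in closed form, where throughout I take $x \sim \N(m(t),C(t))$ and suppress the time argument. Since $f$ has degree two, the integrands $x_i f(x)$ and $x_i x_j f(x)$ are polynomials of degree three and four, so the moments I need are the first through fourth moments of a multivariate Gaussian. I would obtain these via the shift $x = m + z$ with $z \sim \N(0,C)$, so that all odd moments of $z$ vanish and the fourth moment reduces to Isserlis' (Wick's) theorem, $\E[z_i z_j z_k z_l] = C_{ij}C_{kl} + C_{ik}C_{jl} + C_{il}C_{jk}$. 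The zeroth-order piece is immediate, $\E[f(x)] = \tr(AC) + f(m)$, which I abbreviate by $E$.

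For the mean equation, the only nontrivial ingredient is the third moment appearing in $\E[x_i x^\top A x] = \sum_{k,l} A_{kl}\,\E[x_i x_k x_l]$. After expanding and repeatedly using $A = A^\top$ and $C = C^\top$ to collapse index sums such as $\sum_l C_{il}(Am)_l = (CAm)_i$, I expect the clean identity
\[
	\E[x_i f(x)] = m_i E + 2(CAm)_i + (Cb)_i.
\]
Substituting this into $\dot m_i = m_i E - \E[x_i f(x)]$ cancels the $m_i E$ terms and leaves $\dot m = -2CAm - Cb$, as claimed.

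For the covariance equation I need the fourth moment $\sum_{k,l} A_{kl}\,\E[x_i x_j x_k x_l]$ together with the third-moment term from $b^\top x$. Expanding the ten Isserlis/mean terms and again reorganizing the sums into matrix products---including $\sum_{k,l} C_{ik}A_{kl}C_{lj} = (CAC)_{ij}$---I anticipate
\[
	\E[x_i x_j f(x)] = (C_{ij} + m_i m_j)E + 2m_i (CAm)_j + 2m_j (CAm)_i + m_i(Cb)_j + m_j(Cb)_i + 2(CAC)_{ij}.
\]
Feeding this, along with the two mean-type expectations already computed (which also appear in the $\dot C_{ij}$ formula), into the covariance line of \eqref{eq: agrf} produces three families of terms: those weighted by $E$, those involving $CAm$, and those involving $Cb$. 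Each family cancels identically, leaving only $\dot C_{ij} = -2(CAC)_{ij}$, i.e.\ $\dot C = -2CAC$.

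I expect the main obstacle to be the fourth-moment bookkeeping rather than any conceptual difficulty: one must track all ten terms of $\E[x_i x_j x_k x_l]$, consistently exploit the symmetry of $A$ and $C$ to recognize the matrix products $CAm$, $Cb$, and $CAC$, and then verify that the $E$-, $CAm$-, and $Cb$-weighted contributions cancel. This cancellation is where the specific coefficients $m_i(t)$, $m_j(t)$, and $C_{ij}(t) - m_i(t)m_j(t)$ in \eqref{eq: agrf} conspire, and checking it carefully is the delicate part of the argument.
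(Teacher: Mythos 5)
Your proposal is correct and follows essentially the same route as the paper: substitute the quadratic into \eqref{eq: agrf}, evaluate the Gaussian expectations of the linear, quadratic, cubic, and quartic integrands in closed form, and verify that the $E$-, $CAm$-, and $Cb$-weighted terms cancel to leave $\dot m = -2CAm - Cb$ and $\dot C = -2CAC$; your intermediate identities for $\E[x_i f(x)]$ and $\E[x_i x_j f(x)]$ agree exactly with the expressions the paper obtains. The only cosmetic difference is that you derive the moment formulas from Isserlis' theorem rather than citing them from \citet{petersen2008matrix} and \citet{jacimovic2023fundamental}.
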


\begin{proof}
	Let $i,j\in\{1,\dots,n\}$. Using the formulas for Gaussian expectations of linear, quadratic, and cubic forms from \citet{petersen2008matrix} and \citet{jacimovic2023fundamental}, we have that
	\begin{align*}
		\dot{m}_i(t) &= m_i(t) \E_{x\sim\N(m(t),C(t))}[x^\top A x + b^\top x + c] \\
		& \qquad - \E_{x\sim\N(m(t),C(t))}[x_i (x^\top Ax + b^\top x + c)] \\
		&= m_i(t) \left(\tr(AC(t)) + m(t)^\top A m(t) + b^\top m(t) + c\right) \\
		&\qquad - m_i(t) m(t)^\top A m(t) - 2e_i^\top C(t) A m(t) \\
		&\qquad - m_i(t) \tr(AC(t)) - m_i(t) b^\top m(t) \\
		&\qquad - e_i^\top C(t) b - m_i(t) c \\
		&= -2e_i^\top C(t) A m(t) - e_i^\top C(t) b.
	\end{align*}
	We repeat a similar computation for the covariance matrix. For conciseness, we drop the time argument $t$ for both $C$ and $m$:	
	\begin{align*}
		\dot{C}_{ij} &= \left(C_{ij} - m_im_j\right) \E_{x\sim\N(m,C)}[x^\top A x + b^\top x + c] \\
		&\qquad - \E_{x\sim\N(m,C)}[x_i x_j (x^\top Ax + b^\top x + c)] \\
		&\qquad + m_i\E_{x\sim\N(m,C)}[x_j (x^\top Ax + b^\top x + c)] \\
		&\qquad + m_j \E_{x\sim\N(m,C)} [x_i (x^\top A x + b^\top x + c)] \\
		&= \left(C_{ij} - m_i m_j\right) \left(\tr(AC) + m^\top A m + b^\top m + c\right) \\
		&\qquad - 2 e_i^\top C A C e_j - 2 m^\top AC(m_j e_i + m_i e_j) \\
		&\qquad - \left(\tr(AC)+m^\top Am\right)\left(C_{ij}+m_im_j\right) \\
		&\qquad - m_im_j b^\top m - \left(m_i e_j^\top + m_j e_i^\top\right)C b - b^\top m C_{ij} \\
		&\qquad - c\left(m_im_j + C_{ij}\right) \\
		&\qquad + m_i\Big(m_j m^\top A m + 2e_j^\top C A m + m_j \tr(AC) \\
		&\quad\qquad + m_j b^\top m + e_j^\top C b + m_j c\Big) \\
		&\qquad + m_j\Big(m_i m^\top A m + 2e_i^\top C A m + m_i \tr(AC) \\
		&\quad\qquad + m_i b^\top m + e_i^\top C b + m_i c\Big).
	\end{align*}
	After simplification, we conclude that
	\begin{equation*}
		\dot{C}_{ij}(t) = -2e_i^\top C(t)AC(t)e_j.
	\end{equation*}
\end{proof}

The ODE governing $C$ in \eqref{eq: quadratic_flow} is autonomous, and independent of $m$. Its solution is known in closed-form:

\begin{proposition}[\citealt{jacimovic2022natural}]
	\label{prop: quadratic_flow_cov}
	Consider $f\colon\Rn\to \R$ given by $f(x) = x^\top A x + b^\top x + c$ with $A\in\mathbb{R}^{n\times n}$ symmetric. Then $C\colon [0,\infty) \to \R^{n\times n}$ given by
	\begin{equation*}
		C(t) = \left(C(0)^{-1} + 2t A\right)^{-1},
	\end{equation*}
	if it exists for all $t\in[0,\infty)$, solves the ODE $\dot{C}(t) = -2C(t)AC(t)$ of the AGRF under $f$ given by \eqref{eq: quadratic_flow}.
\end{proposition}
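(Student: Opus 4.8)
The plan is to treat this as a verification of a guessed solution rather than a derivation: I would substitute the proposed expression into the ODE $\dot{C}(t) = -2C(t)AC(t)$ and confirm it holds, then check consistency with the (implicit) initial condition at $t=0$. The one analytic ingredient needed is the standard formula for the derivative of a matrix inverse: for any differentiable, invertible matrix-valued map $M\colon[0,\infty)\to\R^{n\times n}$, one has $\frac{d}{dt}M(t)^{-1} = -M(t)^{-1}\dot{M}(t)M(t)^{-1}$, which itself follows from differentiating the identity $M(t)M(t)^{-1}=I$.

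First I would set $M(t) \coloneqq C(0)^{-1} + 2tA$, so that the candidate solution is exactly $C(t) = M(t)^{-1}$ wherever this inverse exists. Since $A$ is constant, $M$ is affine in $t$ with $\dot{M}(t) = 2A$. I would then invoke the proposition's standing hypothesis that $C(t)$ exists for all $t\in[0,\infty)$, i.e.\ that $M(t)$ is invertible throughout; because matrix inversion is smooth on the open set of invertible matrices and $M$ is smooth in $t$, the composition $C$ is differentiable on $[0,\infty)$, which is what legitimizes applying the derivative identity. Substituting yields $\dot{C}(t) = -M(t)^{-1}(2A)M(t)^{-1} = -2C(t)AC(t)$, which is precisely the covariance ODE appearing in \eqref{eq: quadratic_flow}. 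The initial condition is then immediate, since $C(0) = (C(0)^{-1} + 0)^{-1} = C(0)$.

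I do not anticipate a genuine obstacle, as the statement merely confirms a closed-form guess; the only point requiring care is justifying the differentiability and invertibility needed to apply the inverse-derivative lemma, which is exactly what the qualifier ``if it exists for all $t$'' supplies. For a fully self-contained argument that sidesteps the lemma, I would instead differentiate the defining identity $M(t)C(t) = I$ directly, obtaining $2A\,C(t) + M(t)\dot{C}(t) = 0$ and hence, upon left-multiplying by $M(t)^{-1} = C(t)$, the relation $\dot{C}(t) = -C(t)(2A)C(t) = -2C(t)AC(t)$; this version keeps the proof elementary and transparent.
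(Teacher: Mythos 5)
Your verification is correct and complete: setting $M(t) = C(0)^{-1} + 2tA$ and applying $\frac{d}{dt}M(t)^{-1} = -M(t)^{-1}\dot{M}(t)M(t)^{-1}$ immediately yields $\dot{C}(t) = -2C(t)AC(t)$, and the initial condition checks out. The paper itself gives no proof, deferring to the cited reference, so there is nothing to compare against; your argument is the standard one, and your care in using the ``if it exists for all $t$'' hypothesis to justify invertibility and differentiability (or, equivalently, differentiating $M(t)C(t)=I$ directly) is exactly what is needed.
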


The inverse matrix $(C(0)^{-1}+2tA)^{-1}$ exists for all $t$ whenever $C(0)$ is positive definite and $A$ is positive semidefinite.

Next, we identify a closed-form solution to the remaining part of the AGRF for quadratic $f$, namely, the solution of the ODE governing $m$. Such a solution was not previously determined in \citet{cressman2006stability} or \citet{jacimovic2022natural}.

\begin{proposition}
	\label{prop: quadratic_flow_mean}
	Consider $f\colon\Rn\to \R$ given by $f(x) = x^\top A x + b^\top x + c$ with $A\in\R^{n\times n}$ symmetric. Let $C \colon [0,\infty) \to \R^{n\times n}$ solve the ODE $\dot{C}(t) = -2C(t)AC(t)$ of the AGRF under $f$ given by \eqref{eq: quadratic_flow}. Then $m\colon [0,\infty) \to \Rn$ given by
	\begin{equation}
		m(t) = C(t) \left(C(0)^{-1}m(0) - tb\right)
		\label{eq: quadratic_flow_mean}
	\end{equation}
	solves the ODE $\dot{m}(t) = -2C(t) A m(t) - C(t)b$ of the AGRF under $f$ given by \eqref{eq: quadratic_flow}.
\end{proposition}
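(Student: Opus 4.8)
The plan is to verify the claimed formula by direct differentiation, exploiting the product structure of the proposed solution. Writing $w(t) \coloneqq C(0)^{-1} m(0) - t b$, the candidate reads $m(t) = C(t) w(t)$, and the only facts I would use about $C$ are the differential identity $\dot{C}(t) = -2 C(t) A C(t)$ from \eqref{eq: quadratic_flow} and the invertibility of $C(0)$ needed to make $w$ well-defined.

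First I would apply the product rule to obtain
\[
	\dot{m}(t) = \dot{C}(t) w(t) + C(t) \dot{w}(t).
\]
Since $w$ is affine in $t$, we have $\dot{w}(t) = -b$, so the second term is exactly $-C(t) b$, matching the forcing term of the target ODE. For the first term I would substitute $\dot{C}(t) = -2 C(t) A C(t)$ and then recognize that $C(t) w(t) = m(t)$, which gives $\dot{C}(t) w(t) = -2 C(t) A \bigl(C(t) w(t)\bigr) = -2 C(t) A m(t)$. Adding the two contributions yields $\dot{m}(t) = -2 C(t) A m(t) - C(t) b$, which is precisely the mean ODE in \eqref{eq: quadratic_flow}. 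To finish, I would check the initial condition by evaluating the formula at $t = 0$, obtaining $m(0) = C(0)\bigl(C(0)^{-1} m(0)\bigr) = m(0)$, so the candidate indeed starts at the prescribed mean.

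The computation is short and presents no real obstacle; the only care needed is keeping the matrix multiplication order correct and noting the cancellation $C(t) w(t) = m(t)$ that collapses the $\dot{C} w$ term into $-2 C A m$. The more interesting point is how one arrives at the formula in the first place, which I would motivate by the ansatz $m(t) = C(t) v(t)$. Substituting into the mean ODE and using $\dot{C} v = -2 C A C v$, the two $-2 C A C v$ contributions cancel and the equation reduces to $C(t) \dot{v}(t) = -C(t) b$, hence $\dot{v}(t) = -b$ whenever $C(t)$ is invertible. Integrating gives $v(t) = C(0)^{-1} m(0) - t b$, recovering \eqref{eq: quadratic_flow_mean}. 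This variation-of-parameters view explains both why the factor $C(t)$ appears out front and why the bracketed term is affine in $t$, and it makes the subsequent verification essentially mechanical.
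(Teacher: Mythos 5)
Your verification is correct and follows essentially the same route as the paper: differentiate the candidate $m(t)=C(t)\left(C(0)^{-1}m(0)-tb\right)$ via the product rule, substitute $\dot{C}(t)=-2C(t)AC(t)$, recognize $C(t)w(t)=m(t)$ to collapse the first term, and check the initial condition. The closing variation-of-parameters motivation is a nice addition but does not change the substance of the argument.
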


\begin{proof}
	First, notice that the proposed solution $m$, given by \eqref{eq: quadratic_flow_mean}, satisfies the required initial condition:
	\begin{equation*}
		m(0) = C(0) \left(C(0)^{-1}m(0) - 0\cdot b\right) = m(0).
	\end{equation*}
	Next, for the proposed solution $m$, we find that
	\begin{align*}
		\dot{m}(t) &= \frac{d}{dt} \left(C(t) \left(C(0)^{-1}m(0) -tb\right)\right) \\
		&= \dot{C}(t) \left(C(0)^{-1}m(0)-tb\right) - C(t)b \\
		&= -2 C(t) A C(t) (C(0)^{-1}m(0) - tb) - C(t)b\\
		&= -2C(t)A m(t) - C(t)b,
	\end{align*}
	where we used the fact that $C$ solves $\dot{C}(t) = -2C(t)AC(t)$. Hence, $m$ indeed solves the desired ODE.
\end{proof}

We now discuss the limiting behavior of AGRFs under quadratic objectives. Consider the mean trajectory \eqref{eq: quadratic_flow_mean}. When $A$ is positive definite, we have that 
\begin{align*}
\lim_{t\to\infty}(C(0)^{-1}+2tA)^{-1} &= 0, \\ 
\lim_{t\to\infty} t(C(0)^{-1}+2tA)^{-1} &= \frac{1}{2} A^{-1},
\end{align*}
implying that $m$ stabilizes to
\begin{align*}
	\lim_{t\to\infty} m(t) &= \lim_{t\to\infty} \left(C(0)^{-1}+2tA\right)^{-1}\left(C(0)^{-1}m(0) - tb\right) \\
	&= -\frac{1}{2}A^{-1}b.
\end{align*}
This is precisely the solution to the optimization 
\begin{equation*}
	\inf_{x\in\Rn}f(x) = \inf_{x\in\Rn} \left(x^\top A x + b^\top x + c\right),
\end{equation*}
so indeed, AGRFs exactly solve convex quadratic optimization problems.

Notice that one may write the dynamics for $m$ as a preconditioned gradient flow:
\begin{align*}
	\dot{m}(t) &= -\left(C(0)^{-1}+2tA\right)^{-1}(2Am(t)+b) \\
	&= -\left(C(0)^{-1}+t\nabla^2 f(m(t))\right)^{-1}\nabla f(m(t)).
\end{align*}
For small $t$, the dynamics resemble a gradient flow that is weighted according to the initial covariance at small times, representing the initial uncertainty in where a minimum resides. As time evolves, the flow begins to more closely resemble Newton's method. \citet{jacimovic2022natural} showed that such replicator dynamics evolving under quadratic functions $f$ and restricted to the statistical manifold of Gaussian distributions follow a natural gradient flow with respect to the Fisher information metric. However, they did not explicitly determine the mean trajectory $m$, nor was the flow's preconditioning matrix identified as having the form we determined above.

\subsection{Descent of Flows}

In this section, we study when AGRFs descend the objective's landscape. We begin by showing that descent always occurs for AGRFs on a convex quadratic objective.

\begin{proposition}
	\label{prop: quadratic_descent_direction}
	Consider $f\colon \Rn \to \R$ given by $f(x) = x^\top A x + b^\top x + c$ with $A\in\R^{n\times n}$ symmetric. If $C(0)$ is positive definite, $A$ is positive semidefinite, and $(m,C)$ is the AGRF under $f$, then the velocity $\dot{m}(t)$ is a descent direction of $f$ for all $t\in [0,\infty)$, i.e.,
	\begin{equation*}
		\frac{d}{dt}f(m(t)) = \nabla f(m(t))^\top \dot{m}(t) < 0.
	\end{equation*}
\end{proposition}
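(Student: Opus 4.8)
The plan is to exploit the fact that, for a quadratic objective, the AGRF mean dynamics are exactly a (time-varying) preconditioned gradient flow. First I would compute $\nabla f(x) = 2Ax + b$ and observe that the mean ODE from \autoref{prop: quadratic_flow} factors as $\dot{m}(t) = -2C(t)Am(t) - C(t)b = -C(t)(2Am(t)+b) = -C(t)\nabla f(m(t))$. Substituting this into the chain rule gives
\[
	\frac{d}{dt} f(m(t)) = \nabla f(m(t))^\top \dot{m}(t) = -\nabla f(m(t))^\top C(t)\, \nabla f(m(t)),
\]
so the entire claim reduces to showing that this quadratic form is strictly positive.

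The next step is to certify that $C(t)$ is positive definite (and symmetric) for every $t\in[0,\infty)$. By \autoref{prop: quadratic_flow_cov}, the covariance solves $C(t) = (C(0)^{-1} + 2tA)^{-1}$. Since $C(0)$ is positive definite, so is $C(0)^{-1}$; since $A$ is positive semidefinite and $t\ge 0$, the matrix $2tA$ is positive semidefinite; hence $C(0)^{-1} + 2tA$ is positive definite and, in particular, invertible for all $t$, so its inverse $C(t)$ is positive definite as well. This immediately yields $\frac{d}{dt}f(m(t)) \le 0$, with equality precisely when $\nabla f(m(t)) = 0$.

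The main obstacle is therefore ruling out $\nabla f(m(t)) = 0$, which is exactly what the strict inequality demands. The clean way to handle this is to track the gradient directly: setting $v(t) \coloneqq \nabla f(m(t)) = 2Am(t)+b$ and differentiating gives $\dot{v}(t) = 2A\dot{m}(t) = -2AC(t)v(t)$, a homogeneous linear (time-varying) ODE in $v$. Because the zero trajectory solves this ODE and solutions of linear ODEs are unique, $v$ either vanishes identically or never vanishes; so as long as the mean is not initialized exactly at the stationary point, i.e.\ $\nabla f(m(0)) \ne 0$, we obtain $\nabla f(m(t)) \ne 0$ for all $t$, and combining with positive definiteness of $C(t)$ gives the strict inequality. (In the degenerate case $\nabla f(m(0)) = 0$, the mean is frozen at a minimizer and the derivative is identically zero; this is the edge case the statement implicitly excludes.) I expect the algebraic bookkeeping in rewriting the mean ODE as a preconditioned gradient flow to be entirely routine, and the positive-definiteness of $C(t)$ to follow directly from the closed form; the only genuine subtlety is the nonvanishing-gradient argument, which the linear-ODE uniqueness viewpoint makes transparent.
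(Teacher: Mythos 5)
Your proof is correct and follows essentially the same route as the paper's: rewrite the mean dynamics as the preconditioned gradient flow $\dot{m}(t) = -C(t)\nabla f(m(t))$ and use positive definiteness of $C(t) = (C(0)^{-1}+2tA)^{-1}$ to make the quadratic form strictly negative. You are in fact more careful than the paper, whose own proof only concludes negativity ``whenever $m(t)$ is not a minimizer of $f$''; your uniqueness argument for the linear ODE $\dot{v}(t) = -2AC(t)v(t)$ with $v(t)=\nabla f(m(t))$ is precisely the missing step that upgrades this to the claimed strict inequality for all $t$ (apart from the degenerate initialization $\nabla f(m(0))=0$, which both you and the statement implicitly set aside).
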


\begin{proof}
	Since $\nabla f(x) = 2Ax + b$, we find from \autoref{prop: quadratic_flow_cov} that
	\begin{align*}
		&\nabla f(m(t))^\top \dot{m}(t) \\
		&\quad = -(2Am(t)+b)(C(0)^{-1}+2tA)^{-1}(2Am(t)+b) \\
		&\quad < 0
	\end{align*}
	whenever $m(t)$ is not a minimizer of $f$.
\end{proof}

\autoref{prop: quadratic_descent_direction} indicates that AGRF will converge to a local minimum of a general twice continuously differentiable function $f$ when the bulk of the Gaussian measure $\N(m(t),C(t))$ is concentrated in a neighborhood around the minimum that locally looks like a strictly convex quadratic. However, such descent need not always occur, as we now illustrate.

\begin{example}
	Consider $f \colon \R\to\R$ defined by $f(x) = \tfrac{3}{2}x^4 - \tfrac{1}{4}x^3 - 3x^2+\tfrac{3}{4}x + 1$. It is easy to verify that $f$ has one local minimum at $x=1$ with $f(1) = 0$, a global minimum at $x=-1$ with $f(-1)=-1$, and a local maximum in between. A simple computation gives that the AGRF mean satisfies
	\begin{align*}
		\dot{m}(t) &= -C(t)\left(6m(t)^3 - \frac{3}{4}m(t)^2 -6m(t) + \frac{3}{4}\right) \\
		&\qquad - C(t)^2\left(18m(t) - \frac{3}{4}\right) \\
		&= -C(t) f'(m(t)) - \frac{1}{2} C(t)^2 f'''(m(t)).
	\end{align*}
	The first term, $-C(t)f'(m(t))$, incentivizes the AGRF to perform local exploitation (descent along the gradient), whereas the second term, $-\tfrac{1}{2}C(t)^2 f'''(m(t))$, incentivizes the AGRF to explore for a better minimum. In particular, we may view the second term as descending along the gradient of $f''$; this term attempts to find a point where $f$ has the steepest negative curvature, i.e., a sharp peak. With a large enough covariance $C(t)$, this term dominates and as such results in a hill-climbing effect, so that the flow can effectively look for better minima.
\end{example}

The above example shows that AGRFs are capable of escaping local minima in order to reach a global minimum. We empirically verify such beneficial behavior in \autoref{sec: exper}.

We now discuss the descent behavior of AGRFs for more general functions $f$. For the remainder of this section, we will assume that $f$ has been adequately shifted so that $f(x) > 0$ for all $x\in X$, for convenience. The descent condition reads
\begin{align*}
	\frac{d}{dt} f(m(t)) &= \nabla f(m(t))^\top \dot{m}(t) \\
			     &= \E_{x\sim\N(m(t),C(t))}[f(x) \nabla f(m(t))^\top (m(t) - x)] \\
			     &<0.
\end{align*}
If the covariance $C(t)$ is small enough, then $x\sim \N(m(t),C(t))$ is close to $m(t)$ with high probability. For such points $x$ that are close to $m(t)$, the first-order Taylor estimate $\nabla f(m(t))^\top (m(t) - x) \approx f(m(t)) - f(x)$ serves as a good approximation. Therefore, the above descent condition is well-approximated by
\begin{equation}
	\E_{x\sim\N(m(t),C(t))}[f(x)(f(m(t)) - f(x))] < 0
	\label{eq: descent}
\end{equation}
for small $C(t)$. The approximate descent condition \eqref{eq: descent} requires that, on average (weighted by the function $f$), it holds that $f(m(t)) < f(x)$, according to our current distribution of strategies $\N(m(t),C(t))$, which one may view as reflecting our ``trust'' in where the optimizer is. From this perspective, it makes sense for the dynamics to descend the objective, since it is believed that, on average, the current mean is a good one, and hence we should exploit it and trust a local descent. On the contrary, if $f(m(t)) > f(x)$ on average, then the approximate descent condition is violated. This indicates a situation in which our current distribution $\N(m(t),C(t))$ believes that the mean $m(t)$ performs worse than the average of the other strategies, and therefore ascent occurs in order to explore for alternatives.

We conclude this section by remarking that (nonconstant) convex functions $f$ always satisfy the approximate descent condition \eqref{eq: descent}, since
\begin{align*}
&\E_{x\sim\N(m(t),C(t))}[f(x)]f(m(t)) - \E_{x\sim\N(m(t),C(t))}[f(x)^2] \\
&\quad \le \E_{x\sim\N(m(t),C(t))}[f(x)]^2 - \E_{x\sim\N(m(t),C(t))}[f(x)^2] \\
&\quad < 0
\end{align*}
by Jensen's inequality and the positivity of the variance of $f(x)$ under $x\sim\N(m(t),C(t))$.


\section{Experiments}
\label{sec: exper}

\begin{figure*}[ht]
    \centering
    \subfloat[]{%
	\centering
        \includegraphics[width=0.27\textwidth]{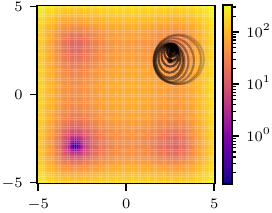}
        \label{fig:st_smallcov}
    }
    \hfill
    \subfloat[]{%
	\centering
        \includegraphics[width=0.27\textwidth]{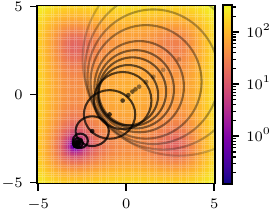}
        \label{fig:st_bigcov}
    }
    \hfill
    \subfloat[]{%
	\centering
        \includegraphics[width=0.27\textwidth]{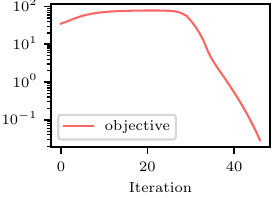}
        \label{fig:st_bigcov_objective}
    }
    \caption{First two images visualize mean (black dots) and covariance (circles) versus ODE time (light gray is first solver iteration, black is last solver iteration).
    \subref{fig:st_smallcov} Styblinski-Tang objective with a small initial covariance.
    \subref{fig:st_bigcov} Styblinski-Tang with a large initial covariance.
    \subref{fig:st_bigcov_objective} Objective value with respect to ODE iteration for trajectory in \subref{fig:st_bigcov}.}
    \label{fig:localmin}
\end{figure*}

This section illustrates our methods with proof-of-concept nonconvex optimization experiments. We use canonical test functions, commonly used for evaluating optimization algorithms, to showcase the qualitative features of AGRFs.

\inlinesubsection{Implementation}
We compute mean and covariance time derivatives using the formulae in \autoref{sec: simulating}.
The resulting AGRF ODEs are integrated using an off-the-shelf Runge-Kutta 2-3 (RK23) solver. All optimizations are executed for a maximum of $T=30$ seconds of simulated time, terminating early when the covariance matrix determinant is smaller than $\epsilon = 0.0001$. We emphasize that, unlike most other optimization algorithms, our framework requires no hyperparameter choices beyond the initial conditions $m(0),C(0)$ and the simulation stopping time $T$, which are standard inputs required by off-the-shelf ODE solvers.

\subsection{Escaping Local Minima}
\label{sec: escape_minima}
\autoref{fig:localmin} illustrates the local minima-escaping behavior of our dynamics. We consider the Styblinski-Tang function, shifted to be more easily visualized on a $\log$ scale:
\begin{equation*}
    f(x) = 78.43 + \frac{1}{2} \left( \sum_{i=1}^n x_i^4 - 16 x_i^2 + 5x_i \right).
\end{equation*}
We let $n=2$, which gives rise to a global minimum at $(-2.903,-2.903)$ and three other local minima that are not global.

We fix an initial mean $m(0) = (3, 2)$ near the top-right local minimum. \autoref{fig:st_smallcov} initializes $C(0) = 2 I$, and \autoref{fig:st_bigcov} initializes $C(0) = 30 I$. Larger initial covariance intuitively helps the ODE ``see'' over the hump to eventually find the global minimum (\autoref{fig:st_bigcov_objective}). While the true replicator dynamics evolving over general probability measures would be guaranteed to find the global minimum by \autoref{prop: replicator_converges}, we observe in \autoref{fig:st_smallcov} that with a small initial covariance the ODE remains stuck in a local minimum. This arises from constraining our measures to be strictly Gaussian over the entire ODE trajectory.

Our framework naturally handles any Gaussian-integrable objective function, including the Styblinski-Tang polynomial objective. We show that sinusoids are also tractable by considering the classical Rastrigin function:
\begin{equation*}
    f(x) = 10 n + \sum_{i=1}^n \left(x_i^2 - 10 \cos(2 \pi x_i)\right).
\end{equation*}
As seen in \autoref{fig:rastrigin}, our algorithm replicates the local minima-escaping behavior observed above for a sufficiently large initial covariance of $C(0)=10I$ and initial mean of $m(0)=(4, 4)$.

\subsection{Optimization Landscape Adaptation}
\autoref{fig:quadratic} analyzes the tendency of the covariance dynamics to accelerate along directions of slow descent. Here, we consider the bivariate quadratic objective function given by
\begin{equation*}
    f(x) = (x_1-3)^2 + 4 (x_2 - 3)^2,
\end{equation*}
which exhibits a smaller gradient along the $x_1$-direction. The AGRF dynamics naturally stretch the covariance matrix along this direction, accelerating convergence (\autoref{fig:quadratic}). This is reminiscent of inverse Hessian preconditioning in Newton's method, and the effect of momentum in stochastic gradient descent-based optimization.

\begin{figure*}[ht]
    \centering
    \subfloat[]{%
	\centering
        \includegraphics[width=0.29\textwidth]{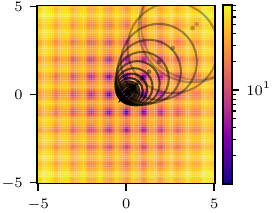}
        \label{fig:rastrigin}
    }
    \hspace*{1.5cm}
    \subfloat[]{%
	\centering
        \includegraphics[width=0.29\textwidth]{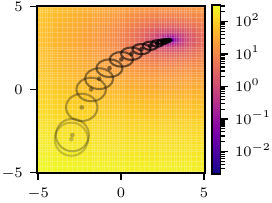}
        \label{fig:quadratic}
    }
    \\
    \subfloat[]{%
	\centering
        \includegraphics[width=0.29\textwidth]{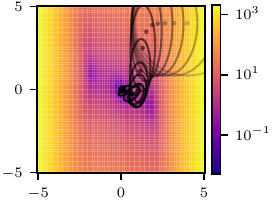}
        \label{fig:threehump_bigmean}
    }
    \hspace*{1.5cm}
    \subfloat[]{%
	\centering
        \includegraphics[width=0.29\textwidth]{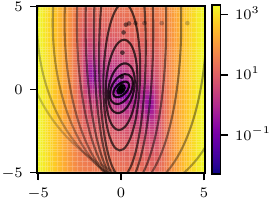}
        \label{fig:threehump_bigcov}
    }
    \caption{Plots visualize mean (black dots) and covariance (circles) versus ODE time (light gray is first solver iteration, black is last solver iteration).
    \subref{fig:rastrigin} Rastrigin objective.
    \subref{fig:quadratic} Quadratic objective $f(x)=(x_1-3)^2 + 4 (x_2-3)^2$.
    \subref{fig:threehump_bigmean} Three-hump objective with $m(0)=(4, 4)$ and $C(0)=10 I$.
    \subref{fig:threehump_bigcov} Three-hump objective with $m(0)=(4, 4)$ and $C(0)=100 I$. }
    \label{fig:landscape_and_init}
\end{figure*}

\subsection{Recovery from Poor Initialization}
\autoref{fig:threehump_bigmean} and \autoref{fig:threehump_bigcov} illustrate the robustness of our approach to poorly conditioned initial points. The objective is a classic three-hump camel function:
\[
    f(x) = 2 x_1^2 - 1.05 x_1^4 + \frac{x_1^6}{6} + x_1 x_2 + x_2^2.
\]
The unique global minimum of this function lies at the origin, with a local minimum on each side. At points with sufficiently large $x_1$-values, this function exhibits an ill-conditioned Hessian and a large-magnitude gradient that is nearly orthogonal to the $x_2$-direction. We test such a poorly conditioned initial mean of $m(0)=(4, 4)$, with initial covariance matrices of $C(0)=10 I$ for \autoref{fig:threehump_bigmean} and $C(0)=100 I$ for \autoref{fig:threehump_bigcov}. In both cases, the ODE succeeds in recovering from the poor initialization and converges to the global minimum. This suggests that our approach successfully leverages the adaptive step size capabilities of the underlying RK23 ODE solver.


\section{Conclusion}
\label{sec: conc}

This work proposes a framework for recasting nonconvex optimization as an evolutionary game. Since simulating the resulting infinite-dimensional replicator dynamics is intractable, we propose to approximate the evolution by a trajectory of Gaussian distributions, termed an approximately Gaussian replicator flow. We theoretically characterize the flow's descent behavior and ability to ascend from local minima, in addition to fully determining the flow on quadratic objectives. Our proof-of-concept experiments on polynomial and sinusoidal nonconvex benchmark objectives demonstrate the potential of our method for finding global optima using off-the-shelf ODE solvers, without the need for heuristic algorithm design choices. Our results demonstrate that even a Gaussian approximation to the replicator dynamics exhibits a range of desirable phenomenon, including the ability to escape local minima, adapt to the optimization landscape, and recover from poor initialization.



\footnotesize
\bibliographystyle{abbrvnat}
\bibliography{tex/references.bib}

\begin{thebibliography}{31}
\providecommand{\natexlab}[1]{#1}
\providecommand{\url}[1]{\texttt{#1}}
\expandafter\ifx\csname urlstyle\endcsname\relax
  \providecommand{\doi}[1]{doi: #1}\else
  \providecommand{\doi}{doi: \begingroup \urlstyle{rm}\Url}\fi

\bibitem[Akimoto et~al.(2012)Akimoto, Nagata, Ono, and Kobayashi]{akimoto2012theoretical}
Y.~Akimoto, Y.~Nagata, I.~Ono, and S.~Kobayashi.
\newblock Theoretical foundation for {CMA-ES} from information geometry perspective.
\newblock \emph{Algorithmica}, 2012.

\bibitem[Amari(1998)]{amari1998natural}
S.-I. Amari.
\newblock Natural gradient works efficiently in learning.
\newblock \emph{Neural Computation}, 1998.

\bibitem[Anderson and Sojoudi(2019)]{anderson2019global}
B.~G. Anderson and S.~Sojoudi.
\newblock Global optimality guarantees for nonconvex unsupervised video segmentation.
\newblock In \emph{57th Annual Allerton Conference on Communication, Control, and Computing}, 2019.

\bibitem[Anderson et~al.(2023{\natexlab{a}})Anderson, Pfrommer, and Sojoudi]{anderson2023tight}
B.~G. Anderson, S.~Pfrommer, and S.~Sojoudi.
\newblock Tight certified robustness via min-max representations of {ReLU} neural networks.
\newblock In \emph{IEEE Conference on Decision and Control}, 2023{\natexlab{a}}.

\bibitem[Anderson et~al.(2023{\natexlab{b}})Anderson, Sojoudi, and Arcak]{anderson2023evolutionary}
B.~G. Anderson, S.~Sojoudi, and M.~Arcak.
\newblock Evolutionary games on infinite strategy sets: Convergence to {N}ash equilibria via dissipativity.
\newblock \emph{arXiv preprint arXiv:2312.08286}, 2023{\natexlab{b}}.

\bibitem[Arcak and Martins(2021)]{arcak2021dissipativity}
M.~Arcak and N.~C. Martins.
\newblock Dissipativity tools for convergence to {N}ash equilibria in population games.
\newblock \emph{IEEE Transactions on Control of Network Systems}, 2021.

\bibitem[Barvinok(2007)]{barvinok2007integration}
A.~Barvinok.
\newblock Integration and optimization of multivariate polynomials by restriction onto a random subspace.
\newblock \emph{Foundations of Computational Mathematics}, 2007.

\bibitem[Chan et~al.(2019)Chan, Hutchison, and Morris]{chan2019bayesian}
L.~Chan, G.~R. Hutchison, and G.~M. Morris.
\newblock Bayesian optimization for conformer generation.
\newblock \emph{Journal of Cheminformatics}, 2019.

\bibitem[Cressman et~al.(2006)Cressman, Hofbauer, and Riedel]{cressman2006stability}
R.~Cressman, J.~Hofbauer, and F.~Riedel.
\newblock Stability of the replicator equation for a single species with a multi-dimensional continuous trait space.
\newblock \emph{Journal of Theoretical Biology}, 2006.

\bibitem[Fattahi and Sojoudi(2020)]{fattahi2020exact}
S.~Fattahi and S.~Sojoudi.
\newblock Exact guarantees on the absence of spurious local minima for non-negative rank-1 robust principal component analysis.
\newblock \emph{Journal of Machine Learning Research}, 2020.

\bibitem[Fox and Shamma(2013)]{fox2013population}
M.~J. Fox and J.~S. Shamma.
\newblock Population games, stable games, and passivity.
\newblock \emph{Games}, 2013.

\bibitem[Glasmachers(2020)]{glasmachers2020global}
T.~Glasmachers.
\newblock Global convergence of the $(1+1)$ evolution strategy to a critical point.
\newblock \emph{Evolutionary Computation}, 2020.

\bibitem[Gramfort et~al.(2012)Gramfort, Kowalski, and H{\"a}m{\"a}l{\"a}inen]{gramfort2012mixed}
A.~Gramfort, M.~Kowalski, and M.~H{\"a}m{\"a}l{\"a}inen.
\newblock Mixed-norm estimates for the {M/EEG} inverse problem using accelerated gradient methods.
\newblock \emph{Physics in Medicine \& Biology}, 2012.

\bibitem[Hansen and Ostermeier(2001)]{hansen2001completely}
N.~Hansen and A.~Ostermeier.
\newblock Completely derandomized self-adaptation in evolution strategies.
\newblock \emph{Evolutionary Computation}, 2001.

\bibitem[He and Lin(2015)]{he2015average}
J.~He and G.~Lin.
\newblock Average convergence rate of evolutionary algorithms.
\newblock \emph{IEEE Transactions on Evolutionary Computation}, 2015.

\bibitem[Hofbauer and Sandholm(2009)]{hofbauer2009stable}
J.~Hofbauer and W.~H. Sandholm.
\newblock Stable games and their dynamics.
\newblock \emph{Journal of Economic Theory}, 2009.

\bibitem[Ja{\'c}imovi{\'c}(2022)]{jacimovic2022natural}
V.~Ja{\'c}imovi{\'c}.
\newblock Natural gradient in evolutionary games.
\newblock \emph{arXiv preprint arXiv:2210.00573}, 2022.

\bibitem[Ja{\'c}imovi{\'c}(2023)]{jacimovic2023fundamental}
V.~Ja{\'c}imovi{\'c}.
\newblock The fundamental theorem of natural selection in optimization and games.
\newblock \emph{Biosystems}, 2023.

\bibitem[Jain et~al.(2017)Jain, Kar, et~al.]{jain2017non}
P.~Jain, P.~Kar, et~al.
\newblock Non-convex optimization for machine learning.
\newblock \emph{Foundations and Trends in Machine Learning}, 2017.

\bibitem[Larra{\~n}aga and Lozano(2001)]{larranaga2001estimation}
P.~Larra{\~n}aga and J.~A. Lozano.
\newblock \emph{Estimation of distribution algorithms: A new tool for evolutionary computation}.
\newblock Springer Science \& Business Media, 2001.

\bibitem[Ma et~al.(2018)Ma, Mansour, Liu, Boufounos, and Kamilov]{ma2018accelerated}
Y.~Ma, H.~Mansour, D.~Liu, P.~T. Boufounos, and U.~S. Kamilov.
\newblock Accelerated image reconstruction for nonlinear diffractive imaging.
\newblock In \emph{IEEE International Conference on Acoustics, Speech and Signal Processing}, 2018.

\bibitem[Oechssler and Riedel(2001)]{oechssler2001evolutionary}
J.~Oechssler and F.~Riedel.
\newblock Evolutionary dynamics on infinite strategy spaces.
\newblock \emph{Journal of Economic Theory}, 2001.

\bibitem[Ollivier et~al.(2017)Ollivier, Arnold, Auger, and Hansen]{ollivier2017information}
Y.~Ollivier, L.~Arnold, A.~Auger, and N.~Hansen.
\newblock Information-geometric optimization algorithms: A unifying picture via invariance principles.
\newblock \emph{Journal of Machine Learning Research}, 2017.

\bibitem[Park et~al.(2019)Park, Martins, and Shamma]{park2019population}
S.~Park, N.~C. Martins, and J.~S. Shamma.
\newblock From population games to payoff dynamics models: A passivity-based approach.
\newblock In \emph{IEEE Conference on Decision and Control (CDC)}, 2019.

\bibitem[Petersen et~al.(2008)Petersen, Pedersen, et~al.]{petersen2008matrix}
K.~B. Petersen, M.~S. Pedersen, et~al.
\newblock \emph{The Matrix Cookbook}.
\newblock Technical University of Denmark, 2008.

\bibitem[Rechenberg(1973)]{rechenberg1973evolutionsstrategie}
I.~Rechenberg.
\newblock Evolutionsstrategie.
\newblock \emph{Optimierung technischer Systeme nach Prinzipien derbiologischen Evolution}, 1973.

\bibitem[Rudolph(2013)]{rudolph2013convergence}
G.~Rudolph.
\newblock Convergence rates of evolutionary algorithms for quadratic convex functions with rank-deficient {H}essian.
\newblock In \emph{Adaptive and Natural Computing Algorithms: 11th International Conference}, 2013.

\bibitem[Sandholm(2010)]{sandholm2010population}
W.~H. Sandholm.
\newblock \emph{Population Games and Evolutionary Dynamics}.
\newblock MIT Press, 2010.

\bibitem[Sloss and Gustafson(2020)]{sloss20202019}
A.~N. Sloss and S.~Gustafson.
\newblock 2019 evolutionary algorithms review.
\newblock \emph{Genetic Programming Theory and Practice}, 2020.

\bibitem[Tar{\l}owski(2023)]{tarlowski2023asymptotic}
D.~Tar{\l}owski.
\newblock On asymptotic convergence rate of evolutionary algorithms.
\newblock \emph{Authorea Preprints}, 2023.

\bibitem[Wierstra et~al.(2014)Wierstra, Schaul, Glasmachers, Sun, Peters, and Schmidhuber]{wierstra2014natural}
D.~Wierstra, T.~Schaul, T.~Glasmachers, Y.~Sun, J.~Peters, and J.~Schmidhuber.
\newblock Natural evolution strategies.
\newblock \emph{Journal of Machine Learning Research}, 2014.

\end{thebibliography}


\end{document}